\newtheorem{theorem}{Theorem}[section]
\newtheorem{remark}{Remark}[section]
\newtheorem{definition}{Definition}[section]
\newtheorem{proposition}[theorem]{Proposition}
\numberwithin{equation}{section}
\begin{document}

\title[the compressible Euler equations in the isentropic nozzle flows ]
	{Global existence of a classical solution for the isentropic nozzle flow}

\author{Shih-Wei Chou}
\address{Department of Finance Engineering and Actuarial Mathematics, Soochow University, Taipei 10048, Taiwan.}
\email{swchou@scu.edu.tw}\thanks{
	S-W. Chou's research is partially supported by the National Science and Technology Council, Taiwan. under the grants MOST 111-2115-M-031 -004 -MY2.
}

\author{Bo-Chih Huang}
\address{Department of Mathematics, National Central University, Chung-Li 32001, Taiwan.}
\email{huangbz@math.ncu.edu.tw}\thanks{
	B-C. Huang's research is partially supported by the National Science and Technology Council, Taiwan. under the grants MOST 111-2115-M-194 -003 -MY2.
}

\author{Yun-guang Lu}
\address{\textcolor{black}{School of Mathematics, Hangzhou Normal University, Hangzhou
		311121, China.}}
\email{ylu2005@ustc.edu.cn}
\thanks{\textcolor{black}{Y.G. Lu 's research is partially supported by the National Natural Science Foundation of China with the Grant No. 12071106.}}

\author{Naoki Tsuge}
\address{Department of Mathematics Education, 
Faculty of Education, Gifu University, 1-1 Yanagido, Gifu
Gifu 501-1193 Japan.}
\email{tsuge.naoki.c9@f.gifu-u.ac.jp}
\thanks{
N. Tsuge's research is partially supported by Grant-in-Aid for Scientific 
Research (C) 17K05315, Japan.
}
\keywords{The Compressible Euler Equation, the nozzle flow, classical solutions, time global solution, the invariant regions, the comparison theorem.}
\subjclass{Primary 
35F31
35L04
35L65, 
35Q31, 
76N10,
76N15; 
Secondary
35A01, 
35B35,   
35B50, 
35B51
35L60,   
}
\date{}

\maketitle
\begin{abstract}
Our goal in this paper is to prove the global existence of a classical solution for the isentropic nozzle flow. Regarding this problem, there exist some global existence theorems of weak solutions. However, that of classical solutions does not have much attention until now.
When we consider the present problem, the main difficulty is to obtain the uniform bound of 
solutions and their derivatives. To solve this, 
we introduce an invariant region depending on the space variable and a functional satisfying the Riccati equation along the characteristic lines.
\end{abstract}

\section{Introduction}
The present paper is concerned with isentropic gas flow in a nozzle on a half space ${\bf R}_+=\left\{x\in{\bf R};x\geq0\right\}$.
This motion is governed by the following compressible Euler equations:
\begin{equation}\left\{\begin{array}{ll}
\displaystyle{\rho_t+m_x=-a(x)m,}\\
\displaystyle{m_t+\left(\frac{m^2}{\rho}+p(\rho)\right)_x
=-a(x)\frac{m^2}{\rho},\quad x\geq0},
\end{array}\right.
\label{eqn:nozzle}
\end{equation}
where $\rho$, $m$ and $p$ are the density, the momentum and the 
pressure of the gas, respectively. If $\rho>0$, 
$v=m/\rho$ represents the velocity of the gas. For a barotropic gas, 
$p(\rho)=\rho^\gamma/\gamma$, where $\gamma\in(1,5/3]$ is the 
adiabatic exponent for usual gases. The given function $a(x)$ is 
represented by 
\begin{align*}
a(x)=A'(x)/A(x),
\end{align*}
where $A\in C^2({\bf R}_+)$ is a slowly variable cross section area at $x$ in the nozzle satisfying $A(x)>0$.

We introduce the Riemann invariants $w,z$, which play important roles
in this paper, as
\begin{definition}
\begin{align*}
w=\frac{m}{\rho}+\frac{\rho^{\theta}}{\theta}=v+\frac{\rho^{\theta}}{\theta},
\quad{z}=\frac{m}{\rho}-\frac{\rho^{\theta}}{\theta}
=v-\frac{\rho^{\theta}}{\theta}\quad\left(\theta=\dfrac{\gamma-1}{2}\right).
\end{align*}
\end{definition}
These Riemann invariants satisfy the following.
\begin{remark}\label{rem:Riemann-invariant}
\normalfont
\begin{align}
&v=\frac{w+z}2,
\;\rho=\left(\frac{\theta(w-z)}2\right)^{1/\theta},\;m=\rho v.
\label{eqn:relation-Riemann}
\end{align}
\end{remark}
If \eqref{eqn:nozzle} has a smooth solution, we can diagonalize \eqref{eqn:nozzle} into 
\begin{align}
\begin{cases}
z_t+\lambda_1z_x=\dfrac{\gamma-1}{8}a(x)(w^2-z^2),\\
w_t+\lambda_2w_x=-\dfrac{\gamma-1}{8}a(x)(w^2-z^2),
\end{cases}
\label{eqn:diagonalization}
\end{align}
where $\lambda_1$ and $\lambda_2$ are the characteristic speeds defined as follows 
\begin{align}
\lambda_1=v-\rho^{\theta},\quad\lambda_2=v+\rho^{\theta}.
\label{char}
\end{align}

We consider three initial boundary problems (P1) with \eqref{eqn:diagonalization}, \eqref{eqn:I.C.}, \eqref{eqn:B.C.1}, (P2) with \eqref{eqn:diagonalization}, \eqref{eqn:I.C.}, \eqref{eqn:B.C.2} and (P3) with \eqref{eqn:diagonalization}, \eqref{eqn:I.C.},  where  
\begin{align}  
&(\rho,m)|_{t=0}=(\rho_0(x),m_0(x)),\quad x\geq0,
\label{eqn:I.C.}\\
&v|_{x=0}=0,
\label{eqn:B.C.1}\\
&(z,w)|_{x=0}=(z_B(t),w_B(t)),\quad t\geq0.
\label{eqn:B.C.2}
\end{align}

The equation \eqref{eqn:nozzle} can be written in the following form 
\begin{align}
u_t+F(u)_x=G(x,u),\quad{x}\in{\bf R}_+,
\end{align}
by using  $u=\begin{pmatrix}
\rho\\m
\end{pmatrix}$, $\displaystyle F(u)=\begin{pmatrix}
m\\\dfrac{m^2}{\rho}+p(\rho)
\end{pmatrix}$ and 
$G(x,u)=\begin{pmatrix}
	-a(x)m\\\displaystyle -a(x)\dfrac{m^2}{\rho}
	\end{pmatrix}$.

From the viewpoint of application, let us review \eqref{eqn:nozzle}. In engineering, nozzles
are useful in various areas. One of the most famous nozzles is the Laval nozzle. It is
a tube that is pinched in the middle, making a hourglass-shape. The Laval nozzle
accelerates a subsonic to a supersonic flow. Because of this property, the nozzle is
widely utilized in some type of turbine, which is an essential part of the modern
rocket engine or the jet engine.

From the mathematical point of view, \eqref{eqn:nozzle} is one of typical equations in the inhomogeneous conservation law and is categorized as the quasi-linear hyperbolic equation. Even if initial data are smooth, such a equation 
has discontinuities in general.  
The pioneer work in this direction is Liu \cite{L1}.
In \cite{L1}, Liu proved the existence of global weak solutions coupled with steady states, by the Glimm scheme, provided that the initial data have small total variation and
are away from the sonic state. {\color{black}Since then, the existence of the global weak solutions for various models have been studied, see \cite{CSW,DH,HCHY,MB} and therein.} Recently, the existence theorems that include the transonic state have been obtained. {\color{black}The transonic stationary solutions has studied in \cite{HYH,HHL,HHL2}.}
The author {\color{black}generalize the invariant region theory in \cite{CS} to} proved the global existence of {\color{black}bounded} weak solutions for the Laval nozzle \cite{T1,T4} and the general nozzle \cite{T2,T3} by the compensated compactness. {\color{black}In \cite{HCHY}, the authors proved the global existence of weak solutions with bounded variation for flows in general nozzle with extra outer forces.}

On the other hand, the existence of classical solutions does not receive much attention until now. {\color{black}Recently, the authors in \cite{CHL2} consider the initial-boundary value problem for \eqref{eqn:nozzle} with large $C^0$ initial-boundary data. They establish the global existence and asymptotic behavior of classical solutions for supersonic flows through the nozzle. The work is based on the local existence, the maximum principle, and the uniform a priori estimates obtained by the generalized Lax transformations. In \cite{CHL}, the authors extended the result in \cite{CHL2} to the ducts depending on both space and time. However, under the a priori estimate in \cite{CHL,CHL2}, the authors constrain themselves to discuss the gas near vacuum on the expanding nozzle. In our work, by choosing appropriate invariant region, we can extend the result for the existence of classical solutions of \eqref{eqn:nozzle} to general nozzle and when the initial-boundary data is far away from vacuum.} \textcolor{black}{In addition, we can treat with the wider range of data.}

To state our main theorem, we consider a function 
\begin{align*}
	f(r)=\dfrac{2}{\gamma-1}\dfrac{\gamma+1+(3-\gamma)r}{|r^2-1|}.
\end{align*}
This function has a minimum on $[-1,1]$. We call the minimum $l$. Then, we denote a solution of $f(r)=l$ on $r<-1$ by $-\sigma_1\;(\sigma_1>1)$; we denote a solution of $f(r)=l$ on $r>1$ by $\sigma_2\;(\sigma_2>1)$. Then we notice 
that 
\begin{align}
	f(r)\geq l\text{ on }[-\sigma_1,\sigma_2].
	\label{eqn:f(r)>l}
\end{align}

We introduce the following conditions of $a(x)=A_{x}/A(x)$:
\vspace*{1ex}

(H1) There exist positive constants $k_1,k_2,\alpha$ and $M$ such that $a(x)\in C^1_b({\bf R}_+)$  satisfy
\begin{align}
{\left(a(x)\right)^2}\leq k_1\left(1+Mx\right)^{-2-\alpha},\; 	\left|a'(x)\right|\leq k_2\left(1+Mx\right)^{-2-\alpha}\text{ for any $x\in{\bf R}_+$}.
\label{eqn:H1}
\end{align}

\vspace*{1ex}

Next, to construct invariant regions, we prepare 
positive constants $L_1,L_2,U_1,U_2$. We independently introduce the following three conditions of $a(x)$ and $L_1,L_2,U_1,U_2$:\vspace*{1ex}

	\begin{itemize}
	\item[(H2)]There exists a function $\bar{a}\in C^1_b({\bf R}_+)\cap L^1({\bf R}_+)$ and positive constants 
	$L_1,L_2,U_1,U_2$ such that 
	\begin{align}
		|a(x)|< l\bar{a}(x)
		\label{eqn:bar a}
	\end{align}
	\begin{align}
		&U_1e^{2\int^{\infty}_0\bar{a}(x)dx}\leq L_1,\quad
		L_2\leq U_2,\label{eqn:invariant1}\end{align}
	\begin{align}
		&\dfrac{3-\gamma}{\gamma+1}<\dfrac{L_2}{L_1},\quad
		\dfrac{U_2}{U_1}<\dfrac{\gamma+1}{3-\gamma},
		\label{eqn:invariant2}\end{align}
	\begin{align}
			\dfrac{L_1}{L_2}\leq\sigma_1,\quad\dfrac{U_2}{U_1}\leq\sigma_1,
		\label{eqn:invariant3}
	\end{align}
\begin{align}
	U_2\geq L_1,\quad{U_1}\geq L_2,
	\label{eqn:invariant7}
\end{align}

	\item[(H3)]There exists a function $\bar{a}\in C^1_b({\bf R}_+)\cap L^1({\bf R}_+)$ and positive constants 
	$L_1,L_2,U_1,U_2$ such that \eqref{eqn:bar a}, 
	\begin{align}
		L_1\leq U_1,\;L_2\leq U_2,
		\label{eqn:invariant4}
	\end{align}
	\begin{align}
		\begin{split}
			L_2-{U_1}e^{2\int^{\infty}_0\bar{a}(x)dx}>0,
		\end{split}
		\label{eqn:invariant5}
	\end{align}
	\begin{align}
		\begin{split}
			\dfrac{U_2}{L_1}e^{2\int^{\infty}_0\bar{a}(x)dx}\leq\sigma_2
		\end{split}.
		\label{eqn:invariant6}
	\end{align}

	\item[(H4)]There exists a function $\bar{a}\in C^1_b({\bf R}_+)\cap L^1({\bf R}_+)$ and positive constants 
$L_1,L_2,U_1,U_2$ such that \eqref{eqn:bar a}, 
\begin{align}
	L_1\geq U_1e^{2\int^{\infty}_0\bar{a}(x)dx},\;L_2\geq U_2e^{2\int^{\infty}_0\bar{a}(x)dx},
	\label{eqn:invariant8}
\end{align}
\begin{align}
	\begin{split}
		U_1>L_2,
	\end{split}
	\label{eqn:invariant9}
\end{align}
\begin{align}
	\begin{split}
		\dfrac{L_1}{U_2e^{2\int^{\infty}_0\bar{a}(x)dx}}\leq\sigma_2
	\end{split}.
	\label{eqn:invariant10}
\end{align}

\end{itemize}
\vspace*{1ex}

\begin{remark}\label{rem:M}
Let $L_1,L_2,U_1,U_2, M$ be constants satisfying (H1) and (H2). For any $\tilde{M}$ such that $\tilde{M}\geq M$,  
replacing $M$ in  \eqref{eqn:H1} with $\tilde{M}$, constants $L_1,L_2,U_1,U_2, \tilde{M}$ also 
satisfy (H1) and (H2). The similar fact holds for (H1) and (H3) (resp. (H1) and (H4)).
\end{remark}

For $L_1,L_2,U_1,U_2$ and $\bar{a}$ satisfying \eqref{eqn:bar a}--\eqref{eqn:invariant3}, we denote a set depending on $x$ by
\begin{align}
	\begin{split}
		\Delta_{m,x}=&\left\{(z,w);-L_1e^{-\int^x_0\bar{a}(y)dy}\leq z\leq -U_1e^{\int^x_0\bar{a}(y)dy},\right.\\
		&\left.L_2e^{-\int^x_0\bar{a}(y)dy}\leq w\leq U_2e^{\int^x_0\bar{a}(y)dy},w\geq z\right\};
	\end{split}
	\label{eqn:Delta1}
\end{align}

\begin{figure}[htbp]
	\begin{center}
		\hspace{-2ex}
		\includegraphics[scale=0.4]{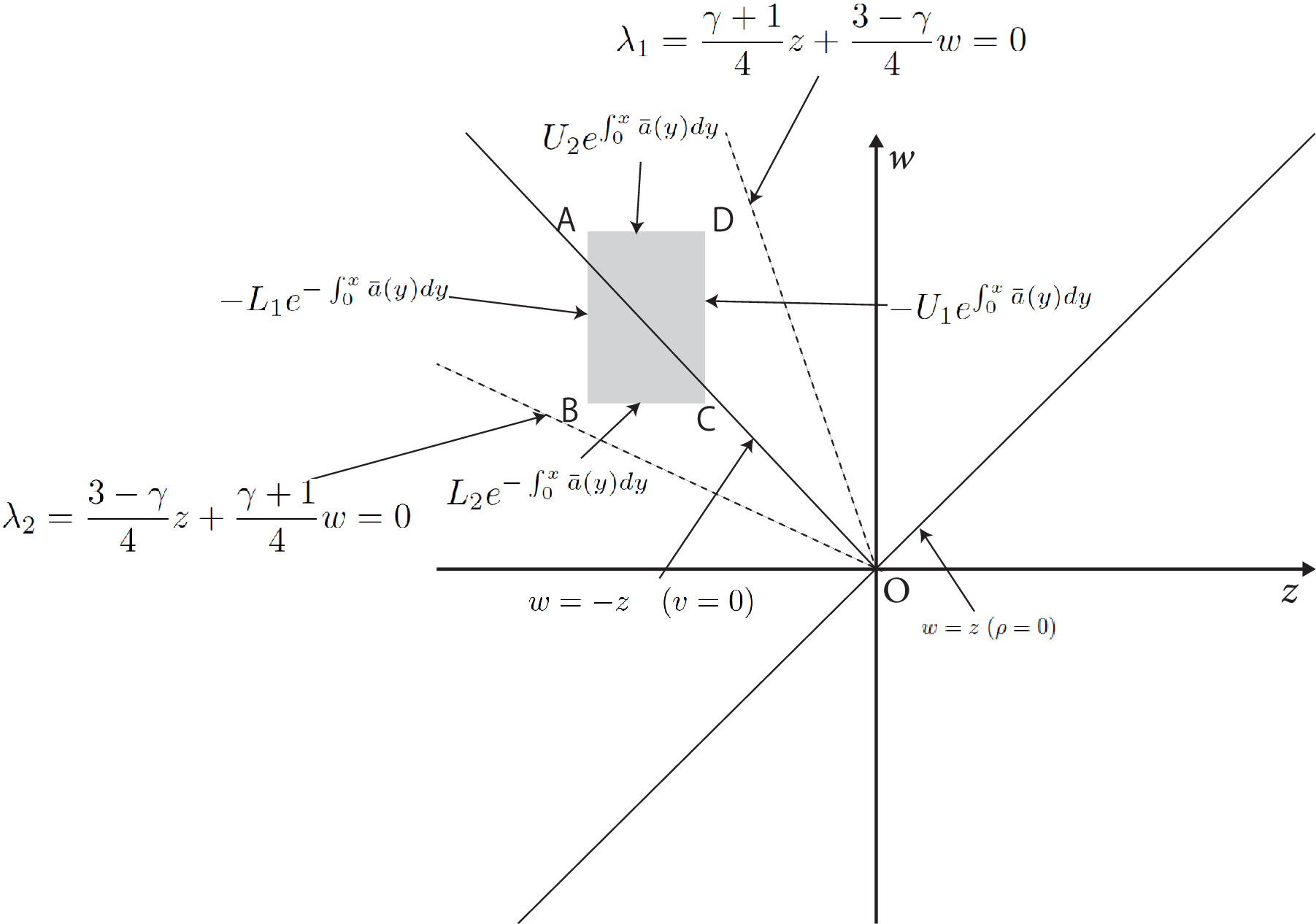}
	\end{center}
	\caption{$\Delta_{m,x}$ in $(z,w)$-plane}
	\label{Fig:1}
\end{figure}

\begin{remark}\label{rem:1}
	We observe the following properties of $\Delta_{m,x}$ (see Figure \ref{Fig:1}).
	\begin{enumerate}
		\item From \eqref{eqn:invariant1}, we find $\Delta_{m,x}\ne\emptyset$.
		\item From \eqref{eqn:invariant2}, Rectangle ABCD does not intersect $\lambda_1=\lambda_2=0$. Solutions in $\Delta_{m,x}$ satisfy $\lambda_1<0$ and $\lambda_2>0$. This means that they are the subsonic flow.
		\item AB and CD are away from $z=0$ . 
		\item BC and DA are away from $w=0$. 
		\item $L_1,\;U_1,\;L_2,\;U_2$ are very close.
		\item Rectangle ABCD is away from the vacuum $w=z$. 
		\item From \eqref{eqn:invariant7}, we find that A lies in $\{w\geq -z\}$ and C lies in $\{w\leq -z\}$.
	\end{enumerate}
\end{remark}

For $L_1,L_2,U_1,U_2$ and $\bar{a}$ satisfying \eqref{eqn:bar a}, \eqref{eqn:invariant4}--\eqref{eqn:invariant6}, we then denote a set depending on $x$ by
\begin{align}
	\begin{split}
		\Delta_{r,x}=&\left\{(z,w);L_1e^{-\int^x_0\bar{a}(y)dy}\leq z\leq \textcolor{black}{U_1}e^{\int^x_0\bar{a}(y)dy},\right.\\
		&\left.{L_2e^{-\int^x_0\bar{a}(y)dy}\leq w\leq U_2e^{\int^x_0\bar{a}(y)dy},w\geq z}\right\};
	\end{split}
	\label{eqn:Delta2}
\end{align}
\begin{figure}[htbp]
	\begin{center}
		\hspace{-2ex}
		\includegraphics[scale=0.4]{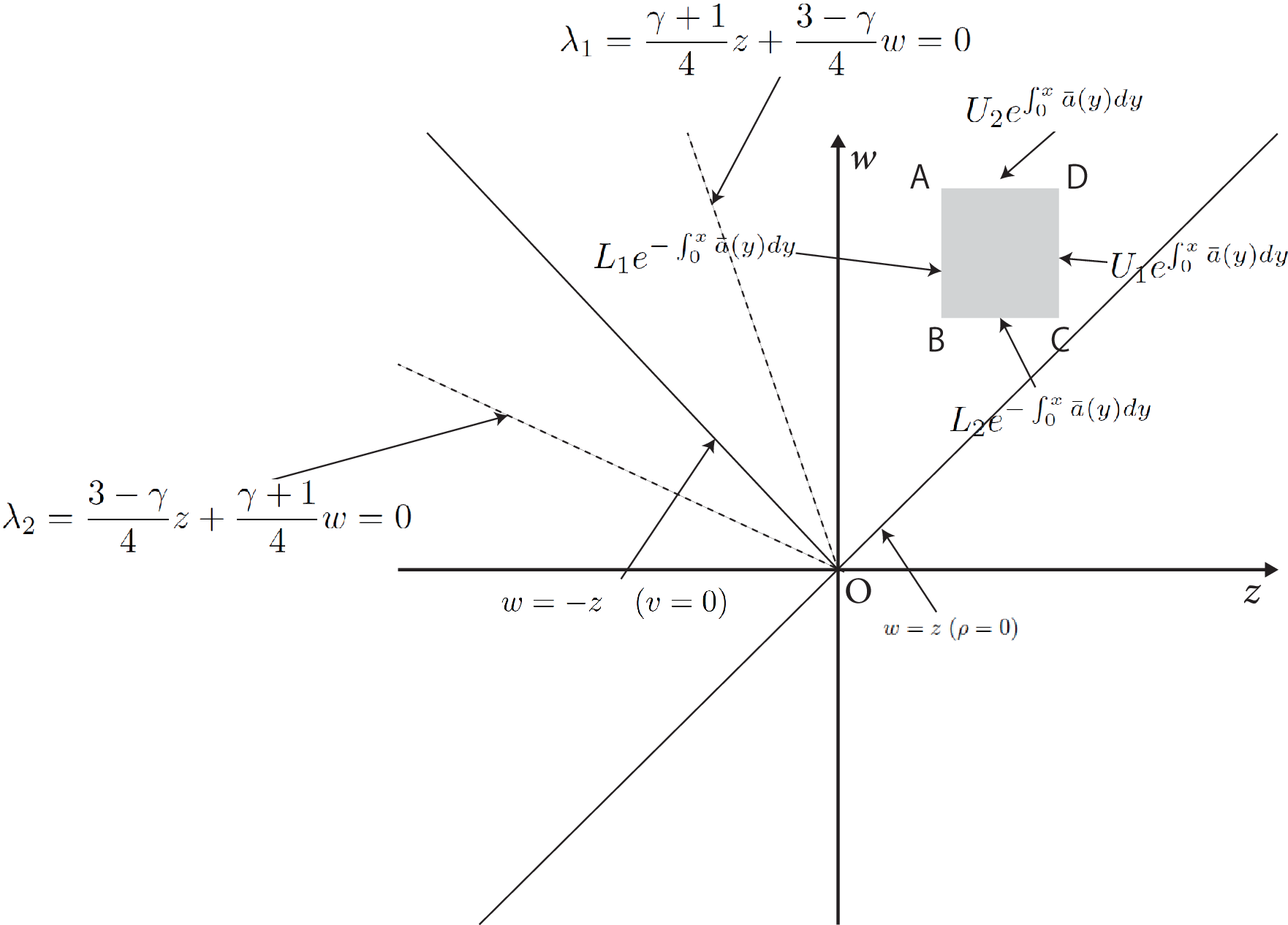}
	\end{center}
	\caption{$\Delta_{r,x}$ in $(z,w)$-plane}
	\label{Fig:2}
\end{figure} 
\begin{remark}\label{rem:2}
	We observe the following properties of $\Delta_{r,x}$ (see Figure \ref{Fig:2}).
	\begin{enumerate}
		\item From \eqref{eqn:invariant4}, we find $\Delta_{r,x}\ne\emptyset$.
		\item AB and CD are away from $z=0$. 
		\item BC and DA are away from $w=0$. 
		\item Rectangle ABCD does not intersect $\lambda_1=\lambda_2=0$. Solutions in $\Delta_{r,x}$ satisfy $\lambda_1>0$ and $\lambda_2>0$. This means that they 
		are the supersonic flow.
		\item $L_1,\;U_1,\;L_2,\;U_2$ are very close.
		\item From \eqref{eqn:invariant5}, Rectangle ABCD is away from the vacuum $w=z$. 
	\end{enumerate}
\end{remark}

For $L_1,L_2,U_1,U_2$ and $\bar{a}$ satisfying \eqref{eqn:bar a}, \eqref{eqn:invariant8}--\eqref{eqn:invariant10}, we then denote a set depending on $x$ by
\begin{align}
	\begin{split}
		\Delta_{l,x}=&\left\{(z,w);-L_1e^{-\int^x_0\bar{a}(y)dy}\leq z\leq -\textcolor{black}{U_1}e^{\int^x_0\bar{a}(y)dy},\right.\\
		&\left.-{L_2e^{-\int^x_0\bar{a}(y)dy}\leq w\leq -U_2e^{\int^x_0\bar{a}(y)dy},w\geq z}\right\}.
	\end{split}
	\label{eqn:Delta3}
\end{align}
\begin{figure}[htbp]
	\begin{center}
		\hspace{-2ex}
		\includegraphics[scale=0.4]{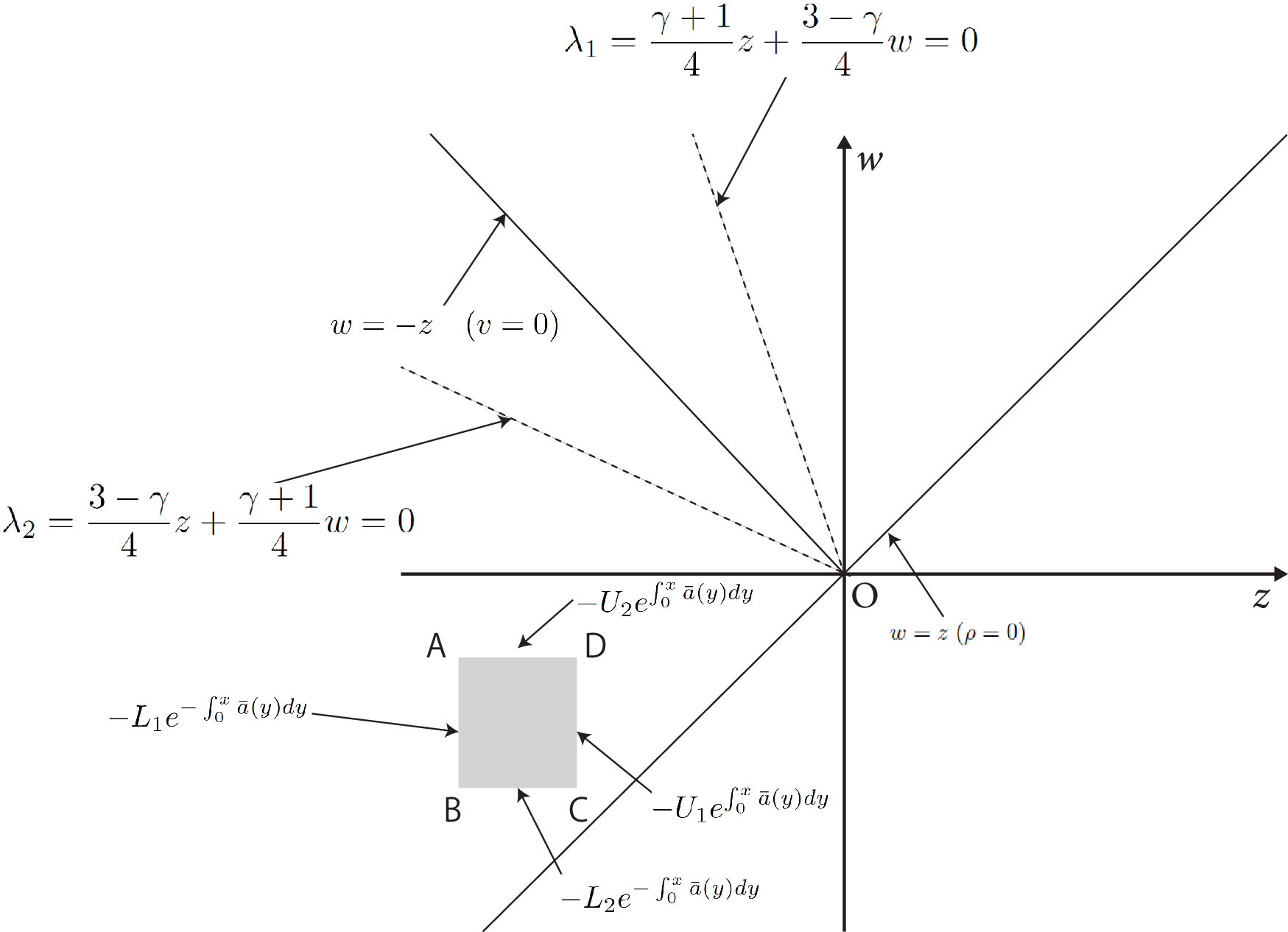}
	\end{center}
	\caption{$\Delta_{l,x}$ in $(z,w)$-plane}
	\label{Fig:3}
\end{figure}

Next, to deduce the uniformly bounded estimate of derivatives of $z$ and $w$, we employ the following, which is introduce in \cite[Section 2.2]{CHL}:

If $\beta\ne-1$,
\begin{align}
	\begin{split}
		&\Phi(x,t;z,w)=(w-z)^{\beta}z_x+\dfrac{a(x)z}{2\beta}(w-z)^{\beta}+\dfrac{a(x)}{2(\beta+1)}(w-z)^{\beta+1},\\
		&\Psi(x,t;z,w)=(w-z)^{\beta}w_x+\dfrac{a(x)w}{2\beta}(w-z)^{\beta}-\dfrac{a(x)}{2(\beta+1)}(w-z)^{\beta+1},
	\end{split}
\label{eqn:Phi-Psi1}
\end{align}\begin{align*}
	\begin{split}
		\Phi_B(x,t;z,w)=&-(w-z)^{\beta}
\dfrac1{\lambda_1}\left\{z_t-\dfrac{\gamma-1}{8}a(x)(w^2-z^2)
\right\}+\dfrac{a(x)z}{2\beta}(w-z)^{\beta}\\
&+\dfrac{a(x)}{2(\beta+1)}(w-z)^{\beta+1},\\
		\Psi_B(x,t;z,w)=&-(w-z)^{\beta}\dfrac1{\lambda_2}\left\{w_t+\dfrac{\gamma-1}{8}a(x)(w^2-z^2)
\right\}+\dfrac{a(x)w}{2\beta}(w-z)^{\beta}\\&-\dfrac{a(x)}{2(\beta+1)}(w-z)^{\beta+1};
	\end{split}
\end{align*}
\textcolor{black}{
	if $\beta=-1$,
	\begin{align}
		\begin{split}
			&\Phi(x,t;z,w)=\dfrac{z_x}{w-z}-\dfrac{a(x)z}{2(w-z)}
			+\dfrac{a(x)}{2}\log(w-z),\\
			&\Psi(x,t;z,w)=\dfrac{w_x}{w-z}-\dfrac{a(x)w}{2(w-z)}
			-\dfrac{a(x)}{2}\log(w-z),
		\end{split}
		\label{eqn:Phi-Psi2}
	\end{align}
	\begin{align*}
		\begin{split}
			\Phi_B(x,t;z,w)=&-\dfrac{1}{w-z}
			\dfrac1{\lambda_1}\left\{z_t-\dfrac{\gamma-1}{8}a(x)(w^2-z^2)
			\right\}-\dfrac{a(x)z}{2(w-z)}\\
			&+\dfrac{a(x)}{2}\log(w-z),\\
			\Psi_B(x,t;z,w)=&-\dfrac{1}{w-z}\dfrac1{\lambda_2}\left\{w_t+\dfrac{\gamma-1}{8}a(x)(w^2-z^2)
			\right\}-\dfrac{a(x)w}{2(w-z)}\\&-\dfrac{a(x)}{2}\log(w-z),
		\end{split}
\end{align*}}
where \begin{align}
	\beta=\dfrac{\gamma-3}{2(\gamma-1)}.
	\label{eqn:beta}
\end{align}

Then, our main theorems are as follows.
\begin{theorem}
We assume that 
\begin{enumerate}
\item $a(x)$ and $L_1,L_2,U_1,U_2$ satisfy (H1) and (H2),
\item $(z,w)\in C^1({\bf R}_+)$ satisfies $(z_0(x),w_0(x))\in\Delta_{m,x}$ and \begin{align}
\begin{split}
-\delta_1\left(1+Mx\right)^{-1-\alpha}&\leq\Phi(x,0;z_0,w_0)\leq\delta_2,\\
\delta_1\left(1+Mx\right)^{-1-\alpha}&\leq\Psi(x,0;z_0,w_0)\leq\delta_2,
\end{split}
\label{eqn:condition-initial}
\end{align}for some positive constants $\delta_1\leq \delta_2$,
\item the compatibility conditions: $w_0(0)+z_0(0)=0,\;w'_0(0)-z'_0(0)=0$.
\end{enumerate}
Then, if we choose $M$ in \eqref{eqn:H1} large enough, 
the initial boundary problem (P1) has a time global classical solution.
\end{theorem}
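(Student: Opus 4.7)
The proof plan is the standard continuation argument for quasilinear hyperbolic initial-boundary value problems. A local classical solution of (P1) on some maximal strip $[0,T^{\ast})\times{\bf R}_+$ exists by the classical theory, since the data are $C^1$ and the compatibility conditions $w_0(0)+z_0(0)=0$, $w_0'(0)-z_0'(0)=0$ are exactly those needed to match $v|_{x=0}=0$ at the $C^1$ level (differentiating the boundary condition in $t$ and using \eqref{eqn:diagonalization} at $x=0$ reduces to $w_0'(0)=z_0'(0)$). To conclude $T^\ast=\infty$ it suffices to produce on $[0,T^\ast)$ uniform bounds on $(z,w)$ and on $(z_x,w_x)$; time derivatives then follow from \eqref{eqn:diagonalization}. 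I obtain these by (i) showing $(z(x,t),w(x,t))\in\Delta_{m,x}$ for all time, and (ii) propagating the Riccati functionals $\Phi,\Psi$ of \eqref{eqn:Phi-Psi1}.

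For (i) I inspect each edge of $\Delta_{m,x}$ via the characteristic form of \eqref{eqn:diagonalization}. On the edge $z=-L_1e^{-\int_0^x \bar{a}(y)\,dy}$, along the 1-characteristic,
\begin{equation*}
\frac{d}{dt}\!\left(z+L_1e^{-\int_0^x \bar{a}(y)\,dy}\right)=\frac{\gamma-1}{8}a(x)(w^2-z^2)-L_1\bar{a}(x)\lambda_1 e^{-\int_0^x \bar{a}(y)\,dy}.
\end{equation*}
Hypothesis \eqref{eqn:bar a} bounds the first term in magnitude by $l\bar{a}(x)$ times a smooth bounded function of $(z,w)$, while the second term carries the full factor $\bar{a}(x)$; writing the ratio of their coefficients in terms of $r=v/\rho^\theta$ recovers precisely the function $f(r)$, so \eqref{eqn:f(r)>l} together with the size constraint \eqref{eqn:invariant3} forces the derivative to have the correct sign. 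Three analogous computations dispose of the other edges; \eqref{eqn:invariant1}--\eqref{eqn:invariant2} pin down the subsonic geometry, and \eqref{eqn:invariant7} ensures that the reflection line $w=-z$ imposed at $x=0$ by $v|_{x=0}=0$ maps $z\in[-L_1,-U_1]$ into $w\in[U_1,L_1]\subseteq[L_2,U_2]$, so the boundary does not eject $(z,w)$ from the rectangle.

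For (ii) I differentiate \eqref{eqn:diagonalization} in $x$ and form the combination \eqref{eqn:Phi-Psi1} to obtain a Riccati equation
\begin{equation*}
\frac{d\Phi}{dt}=c_1(z,w)\Phi^2+c_2(x,z,w)\Phi+c_3(x,z,w)
\end{equation*}
along the 1-characteristic, with $c_1$ bounded and of definite sign on $\Delta_{m,x}$, and with $c_2,c_3$ built from $a(x),a'(x),a(x)^2$ multiplied by smooth bounded functions of $(z,w)$. By (H1), along any characteristic $c_2$ and $c_3$ decay like $(1+Mx)^{-1-\alpha}$ and $(1+Mx)^{-2-\alpha}$ respectively, so their time-integrals are $O(M^{-1})$ after $M$ is enlarged in accordance with Remark \ref{rem:M}. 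Comparing $\Phi$ with super- and sub-solutions of the scalar Riccati ODE $\phi'=c_1\phi^2+g(t)$ with $\int g=O(M^{-1})$, and using the two-sided initial bound \eqref{eqn:condition-initial} (whose $x$-decaying lower bound is exactly tailored to the decay of $c_3$), one propagates $|\Phi|=O(1)$; the same argument runs for $\Psi$ along 2-characteristics. This yields uniform bounds on $z_x,w_x$.

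The main obstacle is the tight interplay between the two estimates. The invariant rectangle $\Delta_{m,x}$ must stay strictly away from vacuum and from the sonic lines (Remark \ref{rem:1}(2),(6)) so that characteristics traverse it transversely, $(w-z)^\beta$ in \eqref{eqn:Phi-Psi1} is bounded above and below, and the sign of $c_1$ is preserved; these requirements are exactly what \eqref{eqn:invariant1}--\eqref{eqn:invariant7} purchase. The delicate book-keeping occurs near $x=0$, where 1-characteristics arriving at the boundary generate outgoing 2-characteristics, and the Riccati comparison must be restarted without losing the decay weight. This is handled by observing that boundary traces lie in a fixed compact subset of $\Delta_{m,0}$, and by using the $\Phi_B,\Psi_B$ forms of the functionals (which are equivalent to $\Phi,\Psi$ on solutions but expressed through $z_t,w_t$), so that the boundary condition $w=-z$ and the compatibility $w_0'(0)=z_0'(0)$ allow $\Phi,\Psi$ to pass through the reflection while retaining their bounds. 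Once invariance and the Riccati barrier are both established, the $(z_x,w_x)$ bound rules out $C^1$ blow-up and standard continuation extends the local solution to all $t\geq 0$.
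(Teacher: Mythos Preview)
Your sketch follows the paper's argument essentially verbatim: invariance of $\Delta_{m,x}$ by the differential inequalities obtained from $|a|<l\bar a$ together with $f(r)\ge l$ on $[-\sigma_1,\sigma_2]$ (Proposition~\ref{pro:1}), and then the Riccati comparison for $\Phi,\Psi$ against the barrier $-\delta_1(1+Mx)^{-1-\alpha}$, where the decisive large factor $M$ enters through $\dfrac{d\Phi_-}{dt}=(1+\alpha)M\lambda_1\delta_1(1+Mx)^{-2-\alpha}$ and $|\lambda_1|\ge d_1>0$ on $\Delta_{m,x}$ (Section~3). Two small slips to correct when you write it out: the ratio that collapses each edge inequality to $f(r)\ge l$ is $r=w/z$ (respectively $z/w$), not $v/\rho^\theta$; and (H1) only yields $|a(x)|\le\sqrt{k_1}(1+Mx)^{-1-\alpha/2}$, so the linear Riccati coefficient $\mathcal B$ decays like $(1+Mx)^{-1-\alpha/2}$ rather than $(1+Mx)^{-1-\alpha}$---harmless, since $\mathcal B\Phi_-$ still decays faster than $(1+Mx)^{-2-\alpha}$.
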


\begin{theorem}
We assume that 
\begin{enumerate}
\item $a(x)$ and $L_1,L_2,U_1,U_2$ satisfy (H1) and (H3), 
\item $(z,w)\in C^1({\bf R}_+)$ satisfies $(z_0(x),w_0(x))\in\Delta_{r,x},\;(z_B(t),w_B(t))\in\Delta_{r,0}$ and \begin{align}&
\begin{split}
&\delta_1\left(1+Mx\right)^{-1-\alpha}\leq\Phi(x,0;z_0,w_0)\leq\delta_2,\\
&\delta_1\left(1+Mx\right)^{-1-\alpha}\leq\Psi(x,0;z_0,w_0)\leq\delta_2,
\end{split}
\label{eqn:condition-initia2}
\end{align}
\begin{align}
\begin{split}
&\delta_1\leq\Phi_B(0,t;z_B,w_B)\leq\delta_2,\\
&\delta_1\leq\Psi_B(0,t;z_B,w_B)\leq\delta_2,
\end{split}
\label{eqn:condition-boundary}
\end{align}for some positive constants $\delta_1\leq \delta_2$, 
\item the compatibility conditions: $z_0(0)=z_B(0),\;w_0(0)=w_B(0)$ and 
\begin{align*}
\begin{cases}
z'_B(0)+\lambda_1(u_0(0))z'_0(0)=\dfrac{\gamma-1}{8}a(0)
\left[\left\{w_0(0)\right\}^2-\left\{z_0(0)\right\}^2\right],\\
w'_B(0)+\lambda_2(u_0(0))w'_0(0)=-\dfrac{\gamma-1}{8}a(0)\left[\left\{w_0(0)\right\}^2-\left\{z_0(0)\right\}^2\right].
\end{cases}
\end{align*}
\end{enumerate}
Then,  if we choose $M$ in \eqref{eqn:H1} large enough, 
the initial boundary problem (P2) has a time global classical solution.
\end{theorem}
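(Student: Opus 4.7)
The plan is to combine local well-posedness with uniform-in-time $C^1$ a priori bounds coming from the invariant region $\Delta_{r,x}$ and the Riccati-type equations satisfied by $\Phi$ and $\Psi$. Because every point of $\Delta_{r,x}$ is supersonic with $\lambda_1>0$ and $\lambda_2>0$, both families of characteristics enter the domain from the union of the initial line $\{t=0\}$ and the boundary line $\{x=0\}$, so (P2) is essentially a pure Cauchy problem in characteristic coordinates. Standard quasilinear hyperbolic theory---a Picard iteration on the integrated characteristic form of \eqref{eqn:diagonalization}, using the stated compatibility conditions---yields a unique local $C^1$ solution on some strip $[0,T_*)$.

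To extend $T_*=\infty$ I would establish two a priori estimates as long as the solution exists. The first is $C^0$ invariance: $(z(t,x),w(t,x))\in\Delta_{r,x}$ for all $(t,x)$. On each of the four edges of the rectangle \eqref{eqn:Delta2}, after absorbing the envelope factors $e^{\pm\int_0^x\bar a(y)\,dy}$ into new unknowns, equation \eqref{eqn:diagonalization} becomes a comparison inequality in which $|a(x)|<l\bar a(x)$ forces the source $\pm\frac{\gamma-1}{8}a(x)(w^2-z^2)$ to push the solution strictly into $\Delta_{r,x}$ along the relevant characteristic; the hypotheses \eqref{eqn:invariant5}--\eqref{eqn:invariant6} supply the separation from vacuum and the bound on the supersonic ratio that activate the key quantitative inequality \eqref{eqn:f(r)>l}.

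The second estimate is $C^1$ invariance: the bounds in \eqref{eqn:condition-initia2}--\eqref{eqn:condition-boundary} on $\Phi$ and $\Psi$ propagate for all $t$. By construction, along $dx/dt=\lambda_1$ the functional $\Phi$ satisfies a Riccati equation of the schematic form
\[
\frac{d\Phi}{dt}=-A(z,w)\,\Phi^{2}+B(x,z,w),
\]
with an analogous equation for $\Psi$ along $dx/dt=\lambda_2$. On $\Delta_{r,x}$ the coefficient $A$ is bounded below by a positive constant, while (H1) gives $|B(x,\cdot,\cdot)|\leq K(1+Mx)^{-2-\alpha}$. Strict supersonicity yields $\lambda_j\geq c_0>0$, hence $x(t)\geq x(0)+c_0 t$ along every characteristic, so $B$ is integrable in $t$ along a characteristic with total mass of order $M^{-1}$. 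A barrier comparison with the scalar ODE $y'=-a_0y^2+B_*(t)$ then preserves $\Phi\leq\delta_2$; the weighted lower bound $\Phi\geq\delta_1(1+Mx)^{-1-\alpha}$ is handled symmetrically after rewriting it in the time variable along the characteristic, the weight being precisely matched to the time decay produced by supersonic propagation.

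The principal obstacle is this Riccati step: the quadratic term can in principle blow up in finite time, and the required invariance has to survive a priori. The decisive point is that (H1) forces $a^2$ and $a'$ to decay like $(1+Mx)^{-2-\alpha}$, which together with $\lambda_j\geq c_0$ produces a time-integrable source along every characteristic whose total mass is of order $M^{-1}$; Remark \ref{rem:M} lets us enlarge $M$ without disturbing (H3), so for $M$ large enough the integrated source is dominated by the quadratic damping, excluding blow-up and closing the $C^1$ estimate. Once (i) and (ii) are in hand, a standard continuation principle extends the local solution to a global classical solution of (P2), proving the theorem.
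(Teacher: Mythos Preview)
Your outline matches the paper's proof almost exactly: local existence (Theorem~\ref{thm:local}), the invariant region $\Delta_{r,x}$ for uniform $C^0$ bounds and separation from vacuum (Proposition~\ref{pro:2}), Riccati control of $\Phi,\Psi$ for uniform $C^1$ bounds, and continuation. Two small technical points are worth tightening. First, the actual equations \eqref{eqn:Phi}--\eqref{eqn:Psi} carry a linear term $\mathcal{B}\Phi$ (with $|\mathcal{B}|\le C|a(x)|$) that your schematic $-A\Phi^2+B(x,z,w)$ suppresses; it is harmless, but it must be accounted for. Second, for the upper bound the paper does not argue that the constant barrier $\delta_2$ is preserved (at $x=0$ the source is not made small by enlarging $M$); instead it completes the square, $\mathcal{A}\Phi^2+\mathcal{B}\Phi+\mathcal{C}\le \mathcal{C}-\tfrac{\mathcal{B}^2}{4\mathcal{A}}$, and integrates along the characteristic using $\lambda_1\ge d_1>0$ and $a^2,a'\in L^1$ to get $\Phi(t)\le\Phi(0)+O(1)$. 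The choice of $M$ large is used exactly where you place it, namely to make $\Phi_-(t)=\delta_1(1+Mx(t))^{-1-\alpha}$ a subsolution of \eqref{eqn:Phi2}, which is the paper's mechanism for the lower bound as well.
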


\begin{theorem}
	We assume that 
	\begin{enumerate}
		\item $a(x)$ and $L_1,L_2,U_1,U_2$ satisfy (H1) and (H4), 
		\item $(z,w)\in C^1({\bf R}_+)$ satisfies $(z_0(x),w_0(x))\in\Delta_{l,x}$ and \begin{align}&
			\begin{split}
				&-\delta_1\left(1+Mx\right)^{-1-\alpha}\leq\Phi(x,0;z_0,w_0)\leq\delta_2,\\
				&-\delta_1\left(1+Mx\right)^{-1-\alpha}\leq\Psi(x,0;z_0,w_0)\leq\delta_2,
			\end{split}
			\label{eqn:condition-initia3}
		\end{align}for some positive constants $\delta_1\leq \delta_2$.
	\end{enumerate}
	Then, if we choose $M$ in \eqref{eqn:H1} large enough, 
	the initial boundary problem (P3) has a time global classical solution.
\end{theorem}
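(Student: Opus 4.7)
My plan is to mirror the strategy that drives Theorems 1 and 2: reduce global existence to a uniform $C^1$ a priori estimate and obtain that estimate in two pieces---preservation of the invariant region $\Delta_{l,x}$ at the $C^0$ level, and a Riccati bound on the $x$-derivatives via the functionals $\Phi,\Psi$. First observe that every point of $\Delta_{l,x}$ has $w,z<0$, so that $\lambda_1=\{(3-\gamma)w+(\gamma+1)z\}/4<0$ and $\lambda_2=\{(\gamma+1)w+(3-\gamma)z\}/4<0$, both bounded away from zero thanks to \eqref{eqn:invariant8}--\eqref{eqn:invariant9}. Every characteristic therefore exits the half-line through $x=0$, so (P3) is well posed with only initial data, and standard quasilinear hyperbolic theory produces a $C^1$ solution on some maximal interval $[0,T^*)$; the task is a uniform $C^1$ bound independent of $T^*$.

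\textbf{Invariant-region step.} I would check on each of the four sides of $\Delta_{l,x}$ that $\partial_t+\lambda_i\partial_x$ applied to the relevant Riemann invariant points inward. On the side $w=-U_2 e^{\int_0^x\bar a(y)\,dy}$, for example, \eqref{eqn:diagonalization} gives
\[
\Big(\partial_t+\lambda_2\partial_x\Big)\Big(w+U_2 e^{\int_0^x\bar a(y)\,dy}\Big)=-\tfrac{\gamma-1}{8}a(x)(w^2-z^2)-\lambda_2\,\bar a(x)\,w,
\]
and the drift $-\lambda_2\bar a\,w<0$ has the sign needed for invariance. Substituting $|a|<l\bar a$ from \eqref{eqn:bar a} and dividing by $\bar a\,|w|^2$ reduces the inequality ``drift dominates source'' to $f(|z|/|w|)\geq l$, while \eqref{eqn:invariant10} guarantees $|z|/|w|\leq L_1/(U_2 e^{2\int_0^\infty\bar a(y)\,dy})\leq\sigma_2$, so that $|z|/|w|$ lies in $[-\sigma_1,\sigma_2]$ where $f\geq l$ by construction. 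Analogous computations on the other three sides, using \eqref{eqn:invariant8}--\eqref{eqn:invariant9} to control the complementary ratio and keep $\lambda_i$ away from zero, close the maximum principle.

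\textbf{Derivative step.} With $(z,w)$ trapped in $\Delta_{l,x}$, the choice $\beta=(\gamma-3)/(2(\gamma-1))$ in \eqref{eqn:beta} is designed so that the equations for $\Phi,\Psi$ in \eqref{eqn:Phi-Psi1}--\eqref{eqn:Phi-Psi2} take a Riccati form along the $\lambda_i$-characteristics, schematically
\[
(\partial_t+\lambda_i\partial_x)\Phi=-A\,\Phi^2+B,
\]
with $A>0$ uniformly bounded below and $A,B$ controlled by $(z,w)$, $a$, and $a'$. The hypothesis \eqref{eqn:condition-initia3} supplies $-\delta_1(1+Mx)^{-1-\alpha}\leq\Phi(x,0)\leq\delta_2$ and the analogous bounds for $\Psi$, while \eqref{eqn:H1} yields $a^2,|a'|=O((1+Mx)^{-2-\alpha})$; this decay is integrable along every $\lambda_i$-characteristic, each of which remains inside $\mathbf{R}_+$ because $\lambda_i<0$. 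Comparison with a scalar Riccati ODE, with $M$ taken large as permitted by Remark \ref{rem:M}, then yields uniform bounds $|\Phi|,|\Psi|\leq C$ and hence on $z_x,w_x$. Together with the invariant-region step this gives a uniform $C^1$ a priori bound, and the local solution extends to all $t\geq 0$ by standard continuation.

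\textbf{Main obstacle.} The delicate point is the invariant-region analysis on the sides where the source term $a(x)(w^2-z^2)$ has the ``wrong'' sign: the sharpness of both $|a|<l\bar a$ and of \eqref{eqn:invariant10} is essential, since $\sigma_2$ is precisely the largest $r>1$ at which $f(r)=l$, so any enlargement of the rectangle that pushed $|z|/|w|$ past $\sigma_2$ would let the source overwhelm the drift. A secondary subtlety absent from (P2) is that there is no boundary datum for $\Phi,\Psi$ at $x=0$; but because $\lambda_1,\lambda_2<0$ throughout $\Delta_{l,x}$, every characteristic exits through $x=0$ and never re-enters, which is exactly what makes the Riccati estimate self-contained on the initial line $\mathbf{R}_+$.
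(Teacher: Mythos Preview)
Your proposal is correct and follows essentially the same approach as the paper: the paper itself treats (P3) only by reference, stating Proposition~\ref{pro:3} is proved ``similarly'' to Propositions~\ref{pro:1}--\ref{pro:2} and that the derivative bounds in Section~3 carry over ``similarly'' to $\Delta_{l,x}$, and your sketch fills in exactly those two steps (invariant region via $f(r)\geq l$ together with $|a|<l\bar a$, then the Riccati comparison for $\Phi,\Psi$ along characteristics with $M$ large). One small slip: on the side $w=-U_2e^{\int_0^x\bar a}$ the ratio $|z|/|w|$ is bounded by $L_1/(U_2e^{2\int_0^x\bar a})$, whose worst case is at $x=0$ rather than $x=\infty$, so the invocation of \eqref{eqn:invariant10} needs the companion inequalities in \eqref{eqn:invariant8}--\eqref{eqn:invariant9} as well; but this is a bookkeeping point, not a gap in the strategy.
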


To prove the above theorems, we prepare the following theorem. 
\begin{theorem}\label{thm:local} (\cite[Section 3.8]{B})
	Suppose that $z_0,w_0\in C^1_b({\bf R}_+)$, that $z_B,w_B\in C^1_b({\bf R}_+)$, $a\in C^1_b({\bf R}_+)$ and that the  compatibility conditions hold. Then there exists a positive constant $T$ such that three problems (P1), (P2) and (P3) have a unique bounded $C^1$ solution on ${\bf R}_+\times [0,T]$ respectively, where $T$ depends only on $C^1$ 
	norms of $z_0, w_0, z_B, w_B, a$.
\end{theorem}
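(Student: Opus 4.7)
The statement is a standard local well-posedness result for a $2\times 2$ quasi-linear system in Riemann-invariant form on a half-line, and the authors attribute it to [B, Section 3.8]; my plan is to follow that classical construction via Picard iteration along characteristics.

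First, because the data are in $C^1_b$ and $\rho=(\theta(w-z)/2)^{1/\theta}$ is smooth and bounded away from zero as long as $w-z$ stays positive, I would fix a compact set $K\subset\{w>z\}$ containing the ranges of the initial and (for (P1), (P2)) boundary data. On $K$ the speeds $\lambda_1,\lambda_2$ defined in \eqref{char} and the source terms $\pm\tfrac{\gamma-1}{8}a(x)(w^2-z^2)$ appearing in \eqref{eqn:diagonalization} are uniformly $C^1$, with bounds depending only on the $C^1$ norms of $z_0,w_0,z_B,w_B$ and $a$.

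Next I would set up the Picard scheme. Starting from $(z^{(0)},w^{(0)})$ chosen to match the prescribed data, define $(z^{(n+1)},w^{(n+1)})$ by freezing the coefficients at $(z^{(n)},w^{(n)})$, computing the $i$-characteristics $\xi_i^{(n)}$ as solutions of $dx/dt=\lambda_i(z^{(n)},w^{(n)})$, and integrating the resulting linear ODEs along them. A characteristic through $(\bar x,\bar t)\in{\bf R}_+\times[0,T]$ traces back either to the initial line $\{t=0\}$, where the value is read off from $z_0$ or $w_0$, or, in (P1) and (P2), to the lateral boundary $\{x=0\}$: for (P2) we use the prescribed $(z_B,w_B)$ directly; for (P1) the condition $v|_{x=0}=0$ gives the reflection $z=-w$ at $x=0$, so the incoming $1$-characteristic is determined by the outgoing $2$-value there; (P3) has no lateral boundary contribution. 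Differentiating this representation in $x$ gives integral expressions for $z_x^{(n+1)},w_x^{(n+1)}$ involving the spatial Jacobians of $\xi_i^{(n)}$, which are controlled via Gronwall on $\|(z^{(n)},w^{(n)})\|_{C^1}$.

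A standard contraction argument in $C^0({\bf R}_+\times[0,T])$, combined with the uniform $C^1$ bounds from Gronwall applied to the differentiated equations, then shows that for $T$ sufficiently small, depending only on the $C^1$ norms of the data and $a$, the iteration converges to a unique limit $(z,w)\in C^1_b({\bf R}_+\times[0,T])$ that solves \eqref{eqn:diagonalization} classically and attains the prescribed initial and boundary data. The most delicate point, and the reason for the first-order compatibility conditions, is to secure $C^1$ regularity up to the corner $(0,0)$ in (P1) and (P2): without compatibility, the value of $z_t(0,0)$ obtained by differentiating the boundary datum in $t$ would generically disagree with the one produced by the PDE from $z_0'(0)$ and the source, so the limit would be only Lipschitz at the corner. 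The stated conditions equate these two expressions (and similarly for $w$), which is precisely what forces the limit to be $C^1$ up to $(0,0)$ and completes the argument.
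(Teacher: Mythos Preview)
The paper does not give its own proof of this statement: Theorem~\ref{thm:local} is quoted verbatim from \cite[Section~3.8]{B} and used as a black box, so there is no proof in the paper to compare your proposal against. Your sketch is a faithful outline of the classical Picard iteration along characteristics that underlies the result in \cite{B}, including the correct handling of the reflecting boundary for (P1), the prescribed boundary for (P2), the pure initial-value problem (P3), and the role of the compatibility conditions for $C^1$ regularity at the corner; nothing is missing that would prevent the argument from going through.
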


To extend the above time local solution to a time global one, we must obtain the uniformly bounded estimate of 
$z,w$ and their derivatives. In Section 2, we develop an invariant region depending on the space variable, which is introduced in \cite{T2} and \cite{T3}. The invariant region will 
yields the uniformly bounded estimate of $z,w$ and the lower bound of $\rho$. In Section 3, 
we calculate \eqref{eqn:Phi-Psi1} and \eqref{eqn:Phi-Psi2} to deduce the uniform bound of $z_x,w_x,z_t,w_t$. \eqref{eqn:Phi-Psi1} and \eqref{eqn:Phi-Psi2} satisfy the Riccati equation 
along the characteristic lines. By this property and the comparison theorem, we will prove that 
\eqref{eqn:Phi-Psi1} and \eqref{eqn:Phi-Psi2} are uniformly bounded.

\section{Invariant region}

In this section, we prove that $\Delta_{m,x}$, $\Delta_{r,x}$ and $\Delta_{l,x}$ are invariant regions for (P1), (P2) and (P3) respectively. This yields the uniform bound of $z$ and $w$.

Let us prove the following.
\begin{proposition}\label{pro:1}
	If 
	\begin{align}
		(z_0(x),w_0(x))\in \Delta_{m,x}\quad\text{ for any $x\geq0$}
	\end{align}
	and (P1) has a smooth solution satisfying $\rho\geq0$, then, $\Delta_{m,x}$ is an invariant region for (P1).
\end{proposition}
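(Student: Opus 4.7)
The plan is to run a first-exit-time argument tailored to the $x$-dependent rectangle $\Delta_{m,x}$. Define
\[
\tau=\sup\bigl\{t\ge0:(z(x,s),w(x,s))\in\Delta_{m,x}\text{ for all }x\ge0,\;s\in[0,t]\bigr\},
\]
and suppose for contradiction that $\tau<\infty$. Then at $t=\tau$ there is some $x_*\ge0$ at which $(z,w)(x_*,\tau)$ touches one of the four sides of $\partial\Delta_{m,x_*}$, namely $z=-L_1 e^{-\int_0^x\bar a}$, $z=-U_1 e^{\int_0^x\bar a}$, $w=L_2 e^{-\int_0^x\bar a}$ or $w=U_2 e^{\int_0^x\bar a}$. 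The vacuum side $w=z$ is automatically avoided since $w-z$ is uniformly bounded below by a positive constant in $\Delta_{m,x}$.

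At a touching point on one of these sides I would differentiate the signed distance to the bound along the appropriate characteristic. Because $\lambda_1<0<\lambda_2$ throughout $\Delta_{m,x}$ (Remark~\ref{rem:1}(2)), the $\lambda_2$-characteristic traces back into the sub-domain $s<\tau$ and is the natural vehicle for the $w$-sides, while the $\lambda_1$-characteristic plays the same role for the $z$-sides. For the upper $w$-bound $\bar w(x):=U_2 e^{\int_0^x\bar a}$, the diagonalized system \eqref{eqn:diagonalization} gives
\[
\frac{d(w-\bar w)}{ds}\bigg|_{\lambda_2\text{-char}}=-\tfrac{\gamma-1}{8}a(x)(w-z)(w+z)-\lambda_2\bar a(x)\,\bar w.
\]
Using the strict bound $|a|<l\bar a$ from \eqref{eqn:bar a}, the identity $\lambda_2=\tfrac{(\gamma+1)w+(3-\gamma)z}{4}$, and $(w-z)|w+z|=w^2|1-r^2|$ with $r=z/w$, this quantity is strictly negative precisely when $f(r)>l$ at the touching point. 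The three other sides reduce by symmetric calculations to the same inequality $f(r)>l$ for the $w$-sides (with $r=z/w$) and to $f(s)>l$ for the $z$-sides (with $s=w/z$, using $\lambda_1=\tfrac{(3-\gamma)w+(\gamma+1)z}{4}$). A possible degeneracy $w+z=0$ at the touching point is harmless, because then the source term vanishes identically and the drift term alone, $-\lambda_2\bar a\,\bar w<0$ or $\lambda_1\bar a\,z>0$, yields the contradiction immediately; the other degeneracy $w-z=0$ is ruled out by the region itself.

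The remaining task is the geometric verification that $r$ or $s$ at the touching point lies in $[-\sigma_1,\sigma_2]$ so that \eqref{eqn:f(r)>l} delivers $f\ge l$ and is then upgraded to strict by $|a|<l\bar a$. A direct check of the extremal ratios on each side gives: upper $w$-side, $|r|\le L_1/U_2\le 1\le\sigma_1$ by \eqref{eqn:invariant7}; lower $w$-side, $|r|\le L_1/L_2\le\sigma_1$ by \eqref{eqn:invariant3}; upper $z$-side, $|s|\le U_2/U_1\le\sigma_1$ by \eqref{eqn:invariant3}; lower $z$-side, $|s|\le (U_2/L_1)e^{2\int_0^\infty\bar a}\le U_2/U_1\le\sigma_1$ by combining \eqref{eqn:invariant1} with \eqref{eqn:invariant3}.

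Finally, the boundary case $x_*=0$ needs separate care, since the $\lambda_2$-characteristic through $(0,\tau)$ cannot be traced back into $x>0$. However, the boundary condition \eqref{eqn:B.C.1} forces $w+z\equiv0$ on $x=0$, which has two consequences: (i) the source in \eqref{eqn:diagonalization} vanishes on $\{x=0\}$, and (ii) by \eqref{eqn:invariant7} the segment $\{w=-z\}\cap\Delta_{m,0}$ lies strictly between the two $w$-sides of $\Delta_{m,0}$, so the only sides that can be touched at $x_*=0$ are the $z$-sides; these are handled by the $\lambda_1$-characteristic, which does trace back into the interior. The main obstacle of the argument is thus not any single analytic inequality but keeping the bookkeeping consistent — matching each of the four sides to the correct characteristic, keeping straight the signs of $a,\bar a,\lambda_i,w+z$, and verifying uniformly in $x$ that (H2) yields the required ratio bounds; once the reduction to $f\ge l$ is in place, the contradiction follows from the very definition of $l,\sigma_1,\sigma_2$.
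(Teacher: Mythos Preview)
Your proposal is correct and follows essentially the same route as the paper: a first-exit argument in which, on each of the four sides of $\Delta_{m,x}$, the characteristic derivative is shown to push strictly inward by reducing to the inequality $f(r)\ge l$ from \eqref{eqn:f(r)>l}, with the boundary $x_*=0$ handled via the condition $w+z=0$ (the paper works instead with the rescaled quantities $\underline z=e^{\int_0^x\bar a}z$, $\bar z=e^{-\int_0^x\bar a}z$, etc., which is the same computation up to a positive factor). The only ingredients the paper adds that you omit are two standard technical safeguards: an $\varepsilon$-shrinking $\Delta_{m,x,\varepsilon}$ of the initial region so that the solution is genuinely in the interior for $s<t_*$, and finite propagation speed to localize to a compact $x$-interval so that the touching point $x_*$ is actually attained.
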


\begin{proof}
Choosing $\varepsilon>0$ small enough, we define 
\begin{align*}
	\begin{split}
		\Delta_{m,x,\varepsilon}=&\left\{(z,w);
-\left(L_1-\varepsilon\right)e^{-\int^x_0\bar{a}(y)dy}\leq z\leq -\left(U_1+\varepsilon\right)e^{\int^x_0\bar{a}(y)dy},\right.\\
		&\left.\left(L_2+\varepsilon\right)e^{-\int^x_0\bar{a}(y)dy}\leq w\leq 
\left(U_2-\varepsilon\right)e^{\int^x_0\bar{a}(y)dy},w\geq z\right\}
	\end{split}
\end{align*}
and assume that $(z_0(x),w_0(x))\in \Delta_{m,x,\varepsilon}$. Let us 
prove $\Delta_{m,x}$ is an invariant region for (P1) by a contradiction. 
Then, from Theorem \ref{thm:local} and the finite propagation, it suffices to 
solutions on a compact set $K=[0,R]$ for a positive constant $R$.

If this does not hold, there exist $x_*\in K$ and $t_*>0$ such that the following cases occur. 
\begin{itemize}
\setlength{\parskip}{0.1cm} 
\setlength{\itemsep}{0cm} 
\item[(Case 1)\hspace*{-2ex}] \hspace*{2ex}$\underline{z}(x_*,t_*)=-L_1$,\quad $(z(x,t),w(x,t))\in 
{\rm int}(\Delta_{m,x}),\; x\in{\bf R}_+,\;t<t_*,$\vspace*{0.5ex}
\item[(Case 2)\hspace*{-2ex}] \hspace*{2ex}$z(0,t_*)=\underline{z}(0,t_*)=-L_1$,\quad $(z(x,t),w(x,t))\in {\rm int}(\Delta_{m,x}),\; x\in{\bf R}_+,\linebreak t<t_*,$\vspace*{0.5ex}
\item[(Case 3)\hspace*{-2ex}] \hspace*{2ex}$\bar{z}(x_*,t_*)=-U_1$,\quad $(z(x,t),w(x,t))\in {\rm int}(\Delta_{m,x}),\;  x\in{\bf R}_+,\;t<t_*,$\vspace*{0.5ex}
\item[(Case 4)\hspace*{-2ex}] \hspace*{2ex}${z}(0,t_*)=\bar{z}(0,t_*)=-U_1$,\quad $(z(x,t),w(x,t))\in {\rm int}(\Delta_{m,x}),\;  x\in{\bf R}_+,\linebreak t<t_*,$\vspace*{0.5ex}
\item[(Case 5)\hspace*{-2ex}] \hspace*{2ex}$\underline{w}(x_*,t_*)=L_2$,\quad $(z(x,t),w(x,t))\in {\rm int}(\Delta_{m,x}),\;  x\in{\bf R}_+,\;t<t_*,$\vspace*{0.5ex}
\item[(Case 6)\hspace*{-2ex}] \hspace*{2ex}${w}(0,t_*)=\underline{w}(0,t_*)=L_2$,\quad $(z(x,t),w(x,t))\in {\rm int}(\Delta_{m,x})
,\;  x\in{\bf R}_+,\;t<t_*,$\vspace*{0.5ex}
\item[(Case 7)\hspace*{-2ex}] \hspace*{2ex}$\bar{w}(x_*,t_*)=U_2$,\quad $(z(x,t),w(x,t))\in {\rm int}(\Delta_{m,x}),\;  x\in{\bf R}_+,\;t<t_*,$\vspace*{0.5ex}
\item[(Case 8)\hspace*{-2ex}] \hspace*{2ex}${w}(0,t_*)=\bar{w}(0,t_*)=U_2$,\quad $(z(x,t),w(x,t))\in {\rm int}(\Delta_{m,x})
,\;  x\in{\bf R}_+,\;t<t_*,$
\end{itemize}
where $\underline{z}=e^{\int^x_0\bar{a}(y)dy}{z},\;\bar{z}=e^{-\int^x_0\bar{a}(y)dy}z,\;
\underline{w}=e^{\int^x_0\bar{a}(y)dy}w,\;
\bar{w}=e^{-\int^x_0\bar{a}(y)dy}w$ and ${\rm int}(\Delta_{m,x})$ represents the interior of 
$\Delta_{m,x}$.
\vspace*{1ex}

(Case 1) We set $r=w/z$. Then, if 
$(z,w)\in \Delta_{m,x}$, 
from  $\eqref{eqn:invariant1}_1$ and $\eqref{eqn:invariant3}_2$, we notice that
\begin{align*}
-\sigma_1\leq r\leq0.
\end{align*}
Then, if 
$(z,w)\in \Delta_{m,x}$, we deduce from \eqref{eqn:diagonalization}, \eqref{eqn:f(r)>l} and \eqref{eqn:bar a} 
	\begin{align}
		e^{-\int^x_0\bar{a}(y)dy}\left(\underline{z}_t+\lambda_1\underline{z}_x\right)
=&a(x)\dfrac{\gamma-1}8\left(w^2-z^2\right)
+\bar{a}(x)\left(\dfrac{\gamma+1}4z^2+\dfrac{3-\gamma}4zw\right)\nonumber\\
=&a(x)\dfrac{\gamma-1}8z^2\left(r^2-1\right)
+\bar{a}(x)z^2\left(\dfrac{\gamma+1}4+\dfrac{3-\gamma}4r\right)\nonumber\\
>&-l\bar{a}(x)\dfrac{\gamma-1}8z^2\left|r^2-1\right|
+\bar{a}(x)z^2\left(\dfrac{\gamma+1}4+\dfrac{3-\gamma}4r\right)\nonumber\\
\geq& \dfrac{(\gamma-1)\bar{a}(x)|r^2-1|}{8}z^2\left\{f(r)-l\right\}\nonumber\\
\geq&0.
\label{eqn:maximum1}
\end{align}
In this case, $\underline{z}$ attains the minimum at $(x_*,t_*)$. Then, since $\underline{z}_x=0$ and 
$\underline{z}_t\leq0$, 
we can deduce a contradiction from \eqref{eqn:maximum1}.

\vspace*{1ex}

(Case 2) In this case, we find that $\underline{z}_t(0,t_*)\leq0$ and $\underline{z}_x(0,t_*)\geq0$. Observing Remark \ref{rem:1} (2), since $\lambda_1<0$, we can deduce a contradiction from \eqref{eqn:maximum1}.

\vspace*{1ex}

(Case 3) 	We set  $r=w/z$. Then, if 
	$(z,w)\in \Delta_{m,x}$, from $\eqref{eqn:invariant3}_2$, we notice that
		$-\sigma_1\leq r\leq0$.
	Then, if 
	$(z,w)\in \Delta_{m,x}$, we deduce from \eqref{eqn:diagonalization}, \eqref{eqn:f(r)>l} and \eqref{eqn:bar a}  
	\begin{align}
		e^{\int^x_0\bar{a}(y)dy}\left(\bar{z}_t+\lambda_1\bar{z}_x\right)
		=&a(x)\dfrac{\gamma-1}8\left(w^2-z^2\right)
		-\bar{a}(x)\left(\dfrac{\gamma+1}4z^2+\dfrac{3-\gamma}4zw\right)\nonumber\\
		<& -\dfrac{(\gamma-1)\bar{a}(x)|r^2-1|}{8}z^2\left\{f(r)-l\right\}\nonumber\\
		\leq&0.
\label{eqn:maximum2}
	\end{align}
In this case, from the finite propagation, $\bar{z}$ attains the maximum at $(x_*,t_*)$. Then, 
since $\bar{z}_x=0$ and $\bar{z}_t\geq0$, 
we can deduce a contradiction from \eqref{eqn:maximum2}.

\vspace*{1ex}

(Case 4) In this case, we find that $\bar{z}_t(0,t_*)\geq0$ and $\bar{z}_x(0,t_*)\leq0$. Observing Remark \ref{rem:1} (2), since $\lambda_1<0$, we can deduce a contradiction from \eqref{eqn:maximum2}.

\vspace*{1ex}

(Case 5) 		
We set $r=z/w$. Then, if 
$(z,w)\in \Delta_{m,x}$, from $\eqref{eqn:invariant3}_1$, we notice that $-\sigma_1\leq r\leq0$. Then, if 
$(z,w)\in \Delta_{m,x}$, 
we deduce from \eqref{eqn:diagonalization}, \eqref{eqn:f(r)>l} and \eqref{eqn:bar a} 
\begin{align}
	e^{-\int^x_0\bar{a}(y)dy}\left(\underline{w}_t+\lambda_2\underline{w}_x\right)
	=&-a(x)\dfrac{\gamma-1}8\left(w^2-z^2\right)
	+\bar{a}(x)\left(\dfrac{\gamma+1}4w^2+\dfrac{3-\gamma}4zw\right)\nonumber\\
	>& \dfrac{(\gamma-1)\bar{a}(x)|r^2-1|}{8}w^2\left\{f(r)-l\right\}\nonumber\\
	\geq&0.
\label{eqn:maximum3}
\end{align}	
In this case, $\underline{w}$ attains the minimum at $(x_*,t_*)$. Then, since $\underline{w}_x=0$ and $\underline{w}_t\leq0$, 
we can deduce a contradiction from \eqref{eqn:maximum3}.

\vspace*{1ex}

(Case 6) In this case, from the boundary condition $v=0\;(\text{i.e., } w+z=0)$, (Case 4) and $\eqref{eqn:invariant7}_2$, we find that ${w}(0,t_*)=
-{z}(0,t_*)>U_1\geq L_2$.

\vspace*{1ex}

(Case 7) We set $r=z/w$. Then, if 
$(z,w)\in \Delta_{m,x}$, from $\eqref{eqn:invariant1}_2$ and $\eqref{eqn:invariant3}_1$, we 
notice that $-\sigma_1\leq r\leq0$. Then, if 
$(z,w)\in \Delta_{m,x}$, 
we deduce from \eqref{eqn:diagonalization}, \eqref{eqn:f(r)>l} and \eqref{eqn:bar a}  
	  \begin{align}
	e^{\int^x_0\bar{a}(y)dy}\left(\bar{w}_t+\lambda_2\bar{w}_x\right)
	=&-a(x)\dfrac{\gamma-1}8\left(w^2-z^2\right)
	-\bar{a}(x)\left(\dfrac{\gamma+1}4w^2+\dfrac{3-\gamma}4zw\right)\nonumber\\
	<& -\dfrac{(\gamma-1)\bar{a}(x)|r^2-1|}{8}w^2\left\{f(r)-l\right\}\nonumber\\
	\leq&0.
\label{eqn:maximum4}
\end{align}In this case, $\bar{w}$ attains the maximum at $(x_*,t_*)$. Then, since $\bar{w}_x=0$ and $\bar{w}_t\geq0$, 
we can deduce a contradiction from \eqref{eqn:maximum4}.

\vspace*{1ex}

(Case 8) In this case, from the boundary condition $v=0\;(\text{i.e., } w+z=0)$, (Case 3) and $\eqref{eqn:invariant7}_1$, we find that ${w}(0,t_*)=
-{z}(0,t_*)<L_1\leq U_2$.

Since $\varepsilon$ is arbitrary, we can complete the proof of 
Proposition \ref{pro:1}.
	\end{proof}

We will prove the following.
\begin{proposition}\label{pro:2}
	If 
	\begin{align}
		(z_0(x),w_0(x))\in \Delta_{r,x}\quad (z_B(t),w_B(t))\in \Delta_{r,0}\quad
		\text{ for any $x\geq0$ and $t\geq0$}
	\end{align}
	and (P2) has a smooth solution satisfying $\rho\geq0$, $\Delta_{r,x}$ is an invariant region for  (P2).
\end{proposition}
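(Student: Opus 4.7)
The plan is to mimic the contradiction argument of Proposition \ref{pro:1}, now adapted to the supersonic geometry of $\Delta_{r,x}$ and to the fully prescribed inflow boundary condition of (P2). First I would fix a small $\varepsilon>0$, define the perturbed region
\begin{align*}
\Delta_{r,x,\varepsilon}=&\bigl\{(z,w):\,(L_1+\varepsilon)e^{-\int^x_0\bar{a}(y)dy}\leq z\leq (U_1-\varepsilon)e^{\int^x_0\bar{a}(y)dy},\\
&\quad (L_2+\varepsilon)e^{-\int^x_0\bar{a}(y)dy}\leq w\leq (U_2-\varepsilon)e^{\int^x_0\bar{a}(y)dy},\;w\geq z\bigr\},
\end{align*}
assume initially $(z_0,w_0)\in\Delta_{r,x,\varepsilon}$, and suppose for contradiction that the smooth solution exits $\Delta_{r,x}$ at some first space-time point $(x_*,t_*)$.

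Because (P2) forces $(z(0,t),w(0,t))=(z_B(t),w_B(t))\in\Delta_{r,0}$ for every $t\geq0$, the solution at $x=0$ always lies inside the region, so necessarily $x_*>0$; this simplifies matters considerably relative to the eight-case analysis in Proposition \ref{pro:1}, which was forced by the nonlocal condition $v=0$. Writing $\underline{z}=e^{\int^x_0\bar{a}(y)dy}z$, $\bar{z}=e^{-\int^x_0\bar{a}(y)dy}z$, $\underline{w}=e^{\int^x_0\bar{a}(y)dy}w$ and $\bar{w}=e^{-\int^x_0\bar{a}(y)dy}w$, the interior exit falls into exactly one of four cases---$\underline{z}(x_*,t_*)=L_1$, $\bar{z}(x_*,t_*)=U_1$, $\underline{w}(x_*,t_*)=L_2$, or $\bar{w}(x_*,t_*)=U_2$---each carrying the standard extremum signs (for example $\underline{z}_x=0$ and $\underline{z}_t\leq0$ in the first case).

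The core calculation, parallel to \eqref{eqn:maximum1}--\eqref{eqn:maximum4}, is
\begin{align*}
e^{-\int^x_0\bar{a}(y)dy}\bigl(\underline{z}_t+\lambda_1\underline{z}_x\bigr)=\frac{\gamma-1}{8}a(x)(w^2-z^2)+\bar{a}(x)\Bigl(\frac{\gamma+1}{4}z^2+\frac{3-\gamma}{4}zw\Bigr);
\end{align*}
factoring out $z^2$, setting $r=w/z$, and invoking $|a(x)|<l\bar{a}(x)$ from \eqref{eqn:bar a} bounds the right-hand side below by $\frac{(\gamma-1)\bar{a}(x)|r^2-1|}{8}z^2\{f(r)-l\}\geq0$. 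Three analogous computations for $\bar{z}$ (along $\lambda_1$) and for $\underline{w},\bar{w}$ (both along $\lambda_2$, with $r$ replaced by $z/w$) give inequalities whose signs directly contradict the extremum conditions at $(x_*,t_*)$ in each of the four cases.

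The main obstacle will be verifying that the ratio $r$ always lies in $[-\sigma_1,\sigma_2]$, so that \eqref{eqn:f(r)>l} supplies $f(r)\geq l$. In $\Delta_{r,x}$ one has $w\geq z>0$, whence $r=w/z\in\bigl[1,\,(U_2/L_1)e^{2\int^{\infty}_0\bar{a}(y)dy}\bigr]$ and $r=z/w\in(0,1]$; the upper endpoint of the first interval is at most $\sigma_2$ precisely by \eqref{eqn:invariant6}, while the second interval is trivially contained in $[-\sigma_1,\sigma_2]$. Hence \eqref{eqn:invariant5}--\eqref{eqn:invariant6} are exactly calibrated to close the argument, and passing $\varepsilon\to0^+$ at the end yields the invariance of $\Delta_{r,x}$.
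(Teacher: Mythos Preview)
Your proposal is correct and follows the same route as the paper: the four differential inequalities for $\underline{z},\bar{z},\underline{w},\bar{w}$, the factoring via $r=w/z$ or $r=z/w$, and the appeal to $|a(x)|<l\bar{a}(x)$ together with \eqref{eqn:f(r)>l} are exactly what the paper does. The only difference is presentational---the paper computes the four inequalities and then writes a single sentence invoking ``the maximum principle,'' whereas you spell out the $\varepsilon$-perturbation and the first-touching contradiction explicitly, and you also make explicit why the fully prescribed inflow data of (P2) eliminates the boundary cases that complicated the proof of Proposition~\ref{pro:1}. Your observation that \eqref{eqn:invariant6} is precisely what caps $w/z$ at $\sigma_2$ (and that $z/w\in(0,1)$ trivially lands in $[-\sigma_1,\sigma_2]$) matches the paper's use of \eqref{eqn:invariant5}--\eqref{eqn:invariant6}; note that \eqref{eqn:invariant5} actually gives the strict inequality $w>z$ throughout $\Delta_{r,x}$, so your endpoint $r=1$ is never attained.
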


\begin{proof}
	We first set $\underline{z}=e^{\int^x_0\bar{a}(y)dy}z$ and $r=w/z$. Then, if 
	$(z,w)\in \Delta_{r,x}$, from $\eqref{eqn:invariant6}$, we notice that
	\begin{align*}
		1< r\leq\sigma_2.
	\end{align*}
	Then, if 
	$(z,w)\in \Delta_{r,x}$, we deduce from \eqref{eqn:diagonalization}, \eqref{eqn:f(r)>l} and \eqref{eqn:bar a}  
	\begin{align*}
		e^{-\int^x_0\bar{a}(y)dy}\left(\underline{z}_t+\lambda_1\underline{z}_x\right)
		=&a(x)\dfrac{\gamma-1}8\left(w^2-z^2\right)
		+\bar{a}(x)\left(\dfrac{\gamma+1}4z^2+\dfrac{3-\gamma}4zw\right)\nonumber\\
		>& \dfrac{(\gamma-1)\bar{a}(x)|r^2-1|}{8}z^2\left\{f(r)-l\right\}\nonumber\\
		\geq&0.
	\end{align*}
	We next set $\bar{z}=e^{-\int^x_0\bar{a}(y)dy}z$ and $r=w/z$. Then, if 
	$(z,w)\in \Delta_{r,x}$, from $\eqref{eqn:invariant4}_1$ and $\eqref{eqn:invariant6}$, we notice that
	\begin{align*}
		1< r\leq\sigma_2.
	\end{align*}
	Then, if 
	$(z,w)\in \Delta_{r,x}$, we deduce from \eqref{eqn:diagonalization}, \eqref{eqn:f(r)>l} and \eqref{eqn:bar a}  
	
	\begin{align*}
		e^{\int^x_0\bar{a}(y)dy}\left(\bar{z}_t+\lambda_1\bar{z}_x\right)
		=&a(x)\dfrac{\gamma-1}8\left(w^2-z^2\right)
		-\bar{a}(x)\left(\dfrac{\gamma+1}4z^2+\dfrac{3-\gamma}4zw\right)\nonumber\\
		\leq& -\dfrac{(\gamma-1)\bar{a}(x)|r^2-1|}{8}z^2\left\{f(r)-l\right\}\nonumber\\
		\leq&0.
	\end{align*}

	We next set $\underline{w}=e^{\int^x_0\bar{a}(y)dy}w$ and $r=z/w$. Then, if 
	$(z,w)\in \Delta_{r,x}$, from $\eqref{eqn:invariant5}$, we notice that $0\leq r<1$. Then, if 
	$(z,w)\in \Delta_{r,x}$, 
	we deduce from \eqref{eqn:diagonalization}, \eqref{eqn:f(r)>l} and \eqref{eqn:bar a}  
	  
	\begin{align*}
		e^{-\int^x_0\bar{a}(y)dy}\left(\underline{w}_t+\lambda_2\underline{w}_x\right)
		=&-a(x)\dfrac{\gamma-1}8\left(w^2-z^2\right)
		+\bar{a}(x)\left(\dfrac{\gamma+1}4w^2+\dfrac{3-\gamma}4zw\right)\nonumber\\
		>& \dfrac{(\gamma-1)\bar{a}(x)|r^2-1|}{8}w^2\left\{f(r)-l\right\}\nonumber\\
		\geq&0.
	\end{align*}

	Similarly, we first set $\bar{w}=e^{-\int^x_0\bar{a}(y)dy}w$ and set $r=z/w$. Then, if 
	$(z,w)\in \Delta_{r,x}$, from $\eqref{eqn:invariant4}_2$ and $\eqref{eqn:invariant5}$, we 
	notice that $0\leq r<1$. Then, if 
	$(z,w)\in \Delta_{r,x}$, 
	we deduce from \eqref{eqn:diagonalization}, \eqref{eqn:f(r)>l} and \eqref{eqn:bar a}  
	 
	\begin{align}
		e^{\int^x_0\bar{a}(y)dy}\left(\bar{w}_t+\lambda_2\bar{w}_x\right)
		=&-a(x)\dfrac{\gamma-1}8\left(w^2-z^2\right)
		-\bar{a}(x)\left(\dfrac{\gamma+1}4w^2+\dfrac{3-\gamma}4zw\right)\nonumber\\
		<& -\dfrac{(\gamma-1)\bar{a}(x)|r^2-1|}{8}w^2\left\{f(r)-l\right\}\nonumber\\
		\leq&0.
	\end{align}

	Applying the maximum principle to $\underline{z},\bar{z},
\underline{w}$ and $\bar{w}$, we can complete the proof.
\end{proof}
	
We can similarly prove the following.
\begin{proposition}\label{pro:3}
	If 
	\begin{align}
		(z_0(x),w_0(x))\in \Delta_{l,x}\quad\text{ for any $x\geq0$}
	\end{align}
	and (P3) has a smooth solution satisfying $\rho\geq0$, then, $\Delta_{l,x}$ is an invariant region for (P3).
\end{proposition}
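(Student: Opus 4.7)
The plan is to adapt the proof of Proposition~\ref{pro:2}, with minor adjustments for the left-supersonic geometry of $\Delta_{l,x}$. First I would introduce the four rescaled Riemann invariants
\[
\underline z=e^{\int^x_0\bar a(y)dy}z,\;\; \bar z=e^{-\int^x_0\bar a(y)dy}z,\;\; \underline w=e^{\int^x_0\bar a(y)dy}w,\;\; \bar w=e^{-\int^x_0\bar a(y)dy}w,
\]
so that the four straight sides of $\Delta_{l,x}$ become the level sets $\underline z=-L_1$, $\bar z=-U_1$, $\underline w=-L_2$, $\bar w=-U_2$. By \eqref{eqn:invariant8} these are, respectively, the minima and maxima of $\underline z,\bar z,\underline w,\bar w$ on $\Delta_{l,x}$, which is exactly what a maximum-principle argument will need.

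Next I would check that on each edge the appropriate ratio lies in $[-\sigma_1,\sigma_2]$, so that $f(r)\geq l$ is available. On the two $z$-edges I would take $r=w/z$; since $w,z<0$ on $\Delta_{l,x}$ one has $r>0$, and using $U_1>L_2$ from \eqref{eqn:invariant9} one checks that $|w|<|z|$ throughout $\Delta_{l,x}$, so $r\in(0,1)$. On the two $w$-edges I would take $r=z/w$; here $r>1$, and \eqref{eqn:invariant10} is precisely the bound $r\leq L_1/\bigl(U_2 e^{2\int^\infty_0\bar a(y)dy}\bigr)\leq\sigma_2$.

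With these ratios in hand the algebraic computation is verbatim that of Proposition~\ref{pro:2}: substituting into \eqref{eqn:diagonalization}, factoring the source terms as $z^2$ or $w^2$ times a linear-plus-quadratic expression in $r$, and using $|a(x)|<l\bar a(x)$ from \eqref{eqn:bar a} to absorb $a(x)(w^2-z^2)$ into $l\bar a(x)z^2|r^2-1|$, one obtains
\begin{align*}
e^{-\int^x_0\bar a(y)dy}(\underline z_t+\lambda_1\underline z_x)&\geq \tfrac{(\gamma-1)\bar a(x)|r^2-1|}{8}z^2(f(r)-l)\geq 0,\\
e^{\int^x_0\bar a(y)dy}(\bar z_t+\lambda_1\bar z_x)&\leq -\tfrac{(\gamma-1)\bar a(x)|r^2-1|}{8}z^2(f(r)-l)\leq 0,
\end{align*}
together with the analogues for $\underline w,\bar w$ (with $w^2$ replacing $z^2$). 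A standard maximum-principle contradiction, or equivalently the $\varepsilon$-shrunken-region argument of Proposition~\ref{pro:1}, then rules out a first exit from $\Delta_{l,x}$.

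The only subtlety, which I expect to be the main bookkeeping hurdle, is that (P3) carries no boundary condition at $x=0$, so \emph{a priori} a trajectory might violate the invariant region exactly at the boundary. This is resolved by observing that $\lambda_1,\lambda_2<0$ throughout $\Delta_{l,x}$: writing $\lambda_{1,2}=\bigl((1\mp\theta)w+(1\pm\theta)z\bigr)/2$ and recalling $w,z<0$ and $1\pm\theta>0$, both characteristic speeds are negative, so characteristics leave the half-line through $x=0$ rather than enter it. Consequently the ``boundary-exit'' sub-cases (the analogues of Cases 2, 4, 6, 8 of Proposition~\ref{pro:1}) do not arise, and the interior argument above is sufficient.
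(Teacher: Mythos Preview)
Your overall strategy coincides with the paper's, which merely states that Proposition~\ref{pro:3} is proved ``similarly'' to Propositions~\ref{pro:1} and~\ref{pro:2}; adapting the rescaled invariants $\underline z,\bar z,\underline w,\bar w$ from Proposition~\ref{pro:2} and disposing of the boundary $x=0$ via $\lambda_1,\lambda_2<0$ is exactly the intended route.

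There is, however, one concrete slip in your ratio bound on the $w$-edges. With $r=z/w=|z|/|w|$ and $(z,w)\in\Delta_{l,x}$ one has $|z|\le L_1e^{-\int_0^x\bar a}$ and $|w|\ge U_2e^{\int_0^x\bar a}$, hence
\[
r\;\le\;\frac{L_1}{U_2}\,e^{-2\int_0^x\bar a(y)\,dy}\;\le\;\frac{L_1}{U_2},
\]
with the supremum attained at $x=0$. Your claimed inequality $r\le L_1\big/\bigl(U_2e^{2\int_0^\infty\bar a}\bigr)$ goes the wrong way: that quantity is \emph{smaller} than $L_1/U_2$, not larger. Consequently \eqref{eqn:invariant10} as literally stated, namely $L_1\big/\bigl(U_2e^{2\int_0^\infty\bar a}\bigr)\le\sigma_2$, does not by itself force $r\le\sigma_2$; what is actually needed is $L_1/U_2\le\sigma_2$ (equivalently, the exponential factor in \eqref{eqn:invariant10} should appear in the numerator, parallel to \eqref{eqn:invariant6}). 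Once this is corrected---either as a hypothesis or as a reading of \eqref{eqn:invariant10}---the rest of your argument goes through verbatim.
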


\section{Uniform bound of $z_x$ and $w_x$}
In this section, we drive the uniform bound of $z_x$ and $w_x$. To 
do this, we investigate \eqref{eqn:Phi-Psi1} and \eqref{eqn:Phi-Psi2}, 
which introduce in \cite{CHL}.
In view of \cite[Section 2.2]{CHL}, \eqref{eqn:Phi-Psi1} and \eqref{eqn:Phi-Psi2} satisfy 
\begin{align}
\Phi_t+\lambda_1\Phi_x=\mathcal{A}\Phi^2+\mathcal{B}\Phi+\mathcal{C},\label{eqn:Phi}\\
\Psi_t+\lambda_2\Psi_x=\hat{\mathcal{A}}\Psi^2+\hat{\mathcal{B}}\Phi+\hat{\mathcal{C}},\label{eqn:Psi}
\end{align}
where

for $\beta\ne-1\;(\text{i.e., }\gamma\ne5/3)$,
\begin{align*}
&\mathcal{A}(x,t,\beta)=-\dfrac{\beta-1}{2\beta-1}(w-z)^{-\beta},\\
&\mathcal{B}(x,t,\beta)=\dfrac{a(x)}{2\beta(\beta+1)(2\beta-1)}\left\{\beta(\beta^2+3\beta-2)w+(\beta^3+2\beta^2+3\beta-2)z
\right\},\\
&\mathcal{C}(x,t,\beta)=(w-z)^{\beta}\mathcal{C}_1(x,t,\beta),\\
&\hat{\mathcal{A}}(x,t,\beta)=-\dfrac{\beta-1}{2\beta-1}(w-z)^{-\beta},\\
&\hat{\mathcal{B}}(x,t,\beta)=\dfrac{a(x)}{2\beta(\beta+1)(2\beta-1)}\left\{\beta(\beta^2+3\beta-2)z+(\beta^3+2\beta^2+3\beta-2)w
\right\},\\&\hat{\mathcal{C}}(x,t,\beta)=(w-z)^{\beta}\hat{\mathcal{C}}_1(x,t,\beta)
\end{align*}and 
\begin{align*}\begin{alignedat}{2}
	\mathcal{C}_1(x,t,\beta)=&-\frac{(a(x))^2}{8\beta^2(\beta+1)^2(2\beta-1)}
	\left\{\beta(1-\beta)^2w^2+2\beta(\beta^2+3\beta-2)wz
\right.\\&	\left.
    +(\beta^3+2\beta^2+3\beta-2)z^2\right\}\\
	&-\frac{a_x(x)}{4\beta(\beta+1)(2\beta-1)}\left\{\beta(1-\beta)w^2-2\beta^2wz
	+(2-3\beta-\beta^2)z^2\right\},\\
		\hat{\mathcal{C}}_1(x,t,\beta)=&-\frac{(a(x))^2}{8\beta^2(\beta+1)^2(2\beta-1)}
	\left\{\beta(1-\beta)^2z^2+2\beta(\beta^2+3\beta-2)wz
\right.\\&	\left.
	+(\beta^3+2\beta^2+3\beta-2)w^2\right\}\\
	&-\frac{a_x(x)}{4\beta(\beta+1)(2\beta-1)}\left\{\beta(1-\beta)z^2-2\beta^2wz
	+(2-3\beta-\beta^2)w^2\right\};
\end{alignedat}
\end{align*}

for $\beta=-1\;(\text{i.e., }\gamma=5/3)$,
\begin{align*}
&\mathcal{A}(x,t,-1)=-\dfrac23(w-z),\\
&\mathcal{B}(x,t,-1)=\dfrac{a(x)}{6}\left\{w-4z+4(w-z)\log(w-z)
\right\},\\
&\mathcal{C}(x,t,-1)=(w-z)^{-1}\mathcal{C}_1(x,t,-1),\\
&\hat{\mathcal{A}}(x,t,-1)=-\dfrac23(w-z),\\
&\hat{\mathcal{B}}(x,t,-1)=\dfrac{a(x)}{6}\left\{z-4w-4(w-z)\log(w-z)
\right\},\\
&\hat{\mathcal{C}}(x,t,-1)=(w-z)^{-1}\hat{\mathcal{C}}_1(x,t,-1)
\end{align*}and \textcolor{black}{
\begin{align*}\begin{alignedat}{2}
		\mathcal{C}_1(x,t,-1)=&-\frac{(a(x))^2}{24}
		\bigl\{3w^2+3z^2+2\left(w^2-5wz+4z^2\right)\log(w-z)\\
		&+4(w-z)^2\left(\log(w-z)\right)^2\bigr\}\\
		&+\frac{a_x(x)}{12}\left\{w^2-2wz-5z^2+2\left(w^2+wz-2z^2\right)\log(w-z)\right\},\\
		\hat{\mathcal{C}}_1(x,t,-1)=&-\frac{(a(x))^2}{24}
		\bigl\{3w^2+3z^2+2\left(z^2-5wz+4w^2\right)\log(w-z)\\
		&+4(w-z)^2\left(\log(w-z)\right)^2\bigr\}\\
		&+\frac{a_x(x)}{12}\left\{z^2-2wz-5w^2+2\left(z^2+wz-2w^2\right)\log(w-z)\right\}.
	\end{alignedat}
\end{align*}}

First, for solutions in $\Delta_{m,x}$, we will prove that $z_x$ and $w_x$ are uniformly bounded. 
Let $x(t)$ be the first characteristic line, $\dfrac{dx(t)}{dt}=\lambda_1$ with $x(0)=x_0\geq0$.
We consider \eqref{eqn:Phi} along this line. From \eqref{eqn:Phi}, $\Phi$ satisfies the 
Riccati equation
\begin{align}
	\dfrac{d\Phi(t)}{dt}=\mathcal{A}(t)(\Phi(t))^2+\mathcal{B}(t)\Phi(t)+\mathcal{C}(t),
	\label{eqn:Phi2}
\end{align}where we abbreviate $\Phi(x(t),t),\;\mathcal{A}(x(t),t),\;\mathcal{B}(x(t),t),\;\mathcal{C}(x(t),t)$ as $\Phi(t),\;\mathcal{A}(t),\;\mathcal{B}(t),$\linebreak$\mathcal{C}(t)$, respectively.

When $x(t)\geq0$, we will prove that $\Phi_-(t)=-\delta_1\left(1+M x(t)\right)^{-1-\alpha}$ is a subsolution for \eqref{eqn:Phi2} by choosing 
a positive $M$ large enough, where $\alpha$ and $M$ are defined in \eqref{eqn:H1}. Since our solution contained in $\Delta_{m,x}$, it follows from Remark \ref{rem:1} (6) that 
\begin{align}
	|z(x(t),t)|\leq C_1,\;|w(x(t),t)|\leq C_2,\;w(x(t),t)-z(x(t),t)\geq C_3
	\label{eqn:bound}
\end{align}for some positive constants $C_1,C_2,C_3$. 
In view of \eqref{eqn:condition-initial}, we find that $\Phi_-(0)\leq \Phi(0)\leq\delta_2$.

We then have
\begin{align*}
\left|\mathcal{A}(t)(\Phi_-(t))^2\right|=O(1)\left(1+Mx(t)\right)^{-2-2\alpha},
\end{align*}
where we denote quantities whose moduli satisfy a uniform bound depending only on $C_1,C_2,C_3,\delta_1,\delta_2,k_1,k_2$ by 
$O(1)$.

From \eqref{eqn:H1}, it follows that 
\begin{align*}
\left|\mathcal{B}(t)\Phi_-(t)\right|=O(1)\left(1+Mx(t)\right)^{-2-\frac{3\alpha}{2}},\quad
\left|\mathcal{C}(t)\right|=O(1)(1+Mx(t))^{-2-\alpha}.
\end{align*}
Moreover, from Remark \ref{rem:1} (2), we have
\begin{align}
\lambda_1(x(t),t)<-d_1
\label{eqn:lambda1}
\end{align}
for a positive constant $d_1$. 
Then, from \eqref{eqn:lambda1}, we have
	\begin{align*}
	\dfrac{d\Phi_-(t)}{dt}=(1+\alpha)M\lambda_1\delta_1\left(1+Mx(t)\right)^{-2-\alpha}\leq -(1+\alpha)Md_1\delta_1\left(1+Mx(t)\right)^{-2-\alpha}.
	\end{align*}
We thus have
\begin{align*}
	\dfrac{d\Phi_-(t)}{dt}&-\mathcal{A}(t)(\Phi_-(t))^2-\mathcal{B}(t)\Phi_-(t)-\mathcal{C}(t)\\
\leq&-(1+\alpha)Md_1\delta_1\left(1+Mx(t)\right)^{-2-\alpha}+O(1)\left(1+Mx(t)\right)^{-2-\alpha}\\
\leq&0,
\end{align*}
by fixing $L_1,L_2,U_1,U_2,\alpha,k_1,k_2,\delta_1$ and choosing $M$ large enough  (recall Remark \ref{rem:M}).

Therefore, we obtain 
\begin{align}
	\Phi_-(t)\leq\Phi(t)
	\leq\Phi(0)+\int^t_0\left\{\mathcal{C}(s)-\dfrac{(\mathcal{B}(s))^2}{4\mathcal{A}(s)}\right\}ds. 
	\label{eqn:Phi3}
\end{align}
We prove that the right hand side of \eqref{eqn:Phi3} is 
uniformly bounded for solutions in $\Delta_{m,x}$. 
From (H1) and (H2), since $a^2,a'\in L^1({\bf R})$, we deduce from \eqref{eqn:bound} and \eqref{eqn:lambda1}
\begin{align*}
\int^t_0\left\{\mathcal{C}(s)-\dfrac{(\mathcal{B}(s))^2}{4\mathcal{A}(s)}\right\}ds
\leq& O(1)\int^t_0\left(\left\{a(x(s))\right\}^2+|a'(x(s))|\right) ds\\
=&O(1)\int^{x(t)}_{x(0)}\left(\left\{a(s)\right\}^2+|a'(y)|\right)\dfrac{1}{\lambda_1(x(s),s)}dy\\
\leq&O(1)\int^{x(0)}_{x(t)}\left(\left\{a(s)\right\}^2+|a'(y)|\right)\dfrac{1}{d_1}dy\\
\leq&\dfrac{O(1)}{d_1}(\Vert a^2\Vert_1+\Vert a'\Vert_1).
\end{align*}
Observing \eqref{eqn:Phi3} and \eqref{eqn:bound}, we find that $z_x$ is uniformly bounded. From \eqref{eqn:Psi}, we can similarly prove that $w_x$ is uniformly bounded. In addition, we deduce the uniform bound of 
$z_t$ and $w_t$ from \eqref{eqn:diagonalization}.

Finally, we can similarly prove that $z_x$ and $w_x$ are uniformly bounded for solutions in $\Delta_{r,x}$ and $\Delta_{l,x}$.

\end{document}